\documentclass[11pt, reqno]{amsart}

\usepackage[colorlinks]{hyperref}
\usepackage{amssymb}
\usepackage{mathrsfs}
\usepackage[all,cmtip]{xy}
\usepackage{cite}
\usepackage{color}
\usepackage{tikz-cd}
\usepackage{framed}
\usepackage{enumitem}

\newtheorem{theorem}{Theorem}

\newtheorem{lemma}[theorem]{Lemma}
\newtheorem{proposition}[theorem]{Proposition}

\theoremstyle{definition}

\counterwithout{equation}{section}
\newtheorem*{Brown-Eagleson Theorem}{The Brown-Eagleson Theorem}

\newcommand{\D}{\mathbb D}
\newcommand{\R}{\mathbb R}
\newcommand{\C}{\mathbb C}
\newcommand{\F}{\mathscr F}
\newcommand{\T}{\mathbb T}
\newcommand{\E}{\mathbb E}
\newcommand{\PP}{\mathbb P}
\newcommand{\cal}{\mathcal}
\newcommand{\ol}{\overline}

\newcommand{\la}{\langle}
\newcommand{\ra}{\rangle}

\allowdisplaybreaks

\title{Normal approximation for iterated inner functions}

\author{Yukun Chen}
\address{Yukun Chen: School of Mathematics and Statistics, Wuhan University, Wuhan 430072, China}
\email{yukunchen@whu.edu.cn}

\author{Xiangdi Fu}
\address{Xiangdi Fu: School of Fundamental Physics and Mathematical Sciences, HIAS, University of Chinese Academy of Sciences, Hangzhou, 310024, China}
\email{xdfu@ucas.ac.cn}

\author{Zhaofeng Lin}
\address{Zhaofeng Lin: School of Fundamental Physics and Mathematical Sciences, HIAS, University of Chinese Academy of Sciences, Hangzhou, 310024, China}
\email{linzhaofeng@ucas.ac.cn}

\author{Yanqi Qiu}
\address{Yanqi Qiu: School of Fundamental Physics and Mathematical Sciences, HIAS, University of Chinese Academy of Sciences, Hangzhou, 310024, China}
\email{yanqiqiu@ucas.ac.cn}

\keywords{Berry--Ess\'{e}en theorem, martingale central limit theorem, inner functions}

\makeatletter
\@namedef{subjclassname@2020}{\textup{2020} Mathematics Subject Classification}
\makeatother
\subjclass[2020]{Primary: 30J05; Secondary: 60F05, 37F10.}

\begin{document}
	\begin{abstract}
	 A Berry--Ess\'{e}en theorem for linear combinations of iterates of an inner function is obtained. Our proof, which is based an elementary transfer argument and classical results in martingale theory, also leads to a simple proof of Nicolau and Soler i Gibert's central limit theorem for inner functions.
	\end{abstract}
	\maketitle
	

	\section{Introduction}
	Let $\C$ be the complex plane, and let $\D= \{z\in \C: |z|<1\}$ be the open unit disc. Let $\T=\{z\in \C: |z|=1\}$ be the unit circle, equipped with the normalized Lebesgue measure $m$. An analytic function $f:\D \to \D$ is called {\it inner} if its radial limits $$f(\omega):= \lim_{r\to 1} f(r\omega)$$ have modulus one at almost every $\omega\in \T$. Therefore, by taking radial limits, any inner function $f$ induces a boundary map from $\T$ into itself defined at almost every point. This boundary map will also be denoted by $f$. 
	
	The study of inner functions is a fundamental topic in complex analysis, and a comprehensive account of their properties and related theories can be found in classical monographs; see \cite{Gar81} for example. It is well known that if $f(0)=0$, then the corresponding boundary map preserves the normalized Lebesgue measure $m$, meaning $m(f^{-1}(E)) = m (E)$ for all measurable set $E\subset \T$. This measure-preserving property has motivated extensive research into their dynamical behavior \cite{DM91,Pom81}, which has in turn drawn considerable attention to the iteration of inner functions \cite{Nic22,NS22,NS24}.

	Throughout this note, we denote by $f^{\circ n}$ the $n$-th iterate of $f$, and by $\mathcal{N}(0,\sigma^2)$ the real normal distribution with mean zero and variance $\sigma^2$. We also denote by $\mathcal{CN}(0,\sigma^2)$ the circularly symmetric complex normal distribution with mean zero and variance $\sigma^2$ (equivalently, its real and imaginary parts are independent $\mathcal{N}(0,\sigma^2/2)$ variables).

	The main result of this note is the following Berry--Ess\'{e}en type theorem for iterated inner functions.

\begin{theorem}\label{thm-BE-inner}
	Let $f$ be an inner function with $f(0)=0$ which is not a rotation. Let $(a_n)_{n=1}^\infty$ be a sequence of complex numbers satisfying
	\begin{align}\label{eq-Lindeberg-cond}\sum_{n=1}^\infty |a_n|^2 = \infty,\quad \text{and} \quad \lim_{N\to \infty} \frac{|a_N|^2}{\sum_{n=1}^N |a_n|^2} = 0,
	\end{align} 
	and set $$Z_N:= \frac{1}{\sigma_N}\sum_{n=1}^N a_n f^{\circ n},\quad \text{and }\,\, \sigma_N:= \bigg\| \sum_{n=1}^N a_n f^{\circ n }\bigg\|_{L^2(\T)}.$$ Then for any $\delta\geq 1$, there exist positive constants $C(\delta,f)$ (depending only on $\delta$ and $f$), such that the following Cram\'er-Wold error estimate holds: 
		\begin{equation}
			\begin{split}
		&\sup_{x\in \R,\, \alpha \in \T} \bigg| \PP\Big( {\rm Re}\, \big( \alpha Z_N\big) \leq x \Big) - \PP\Big( \cal{N}(0,1/2) \leq x \Big)\bigg|  \\ 
		& \,\leq \,C(\delta,f) \cdot \bigg(\sum_{n=1}^N |a_n|^{4} \bigg)^{\frac{1+\delta}{6+4\delta}}  \bigg(\sum_{n=1}^N |a_n|^{2} \bigg)^{-\frac{1+\delta}{3+2\delta}} + 2.7 \cdot \frac{|\lambda|}{1-|\lambda|} \frac{\big|\sum_{n=1}^N \lambda^{N-n} a_n\big|}{\sqrt{\sum_{n=1}^N |a_n|^2}}.
			\end{split}\label{eq-rate-an}
		\end{equation}
	\end{theorem}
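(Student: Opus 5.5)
The plan is to turn $\sum a_n f^{\circ n}$ into a reverse martingale plus a small boundary term, apply a martingale Berry--Ess\'een bound with a $(1+\delta)$-moment control on the conditional variance, and absorb the boundary term into the second error term. Throughout, $\lambda$ is the constant appearing in the first correlation: $\lambda:=f'(0)$, or its conjugate depending on convention, with $|\lambda|<1$ because $f$ is not a rotation.

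\emph{Step 1: transfer to a reverse martingale.} Let $\F_n:=(f^{\circ n})^{-1}(\mathcal B(\T))$; this is a decreasing filtration. Since $f(0)=0$, $m$ is $f$-invariant. For $h\in L^2(\T)$, the conditional expectation $\E[h\circ f^{\circ n}\mid \F_{n+1}]$ equals $(Ph)\circ f^{\circ(n+1)}$, where $P$ is the expectation operator $\E[\,\cdot\mid f^{-1}\mathcal B]$ read on the base.

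For $h(z)=z$ we compute $Pz$ by projecting onto $\{f^k,\bar f^k\}$. We have $\langle z,\bar f^k\rangle=0$ for $k\ge 1$ and $\langle z,f^k\rangle=0$ for $k\ge 2$, so $Pz=\lambda f$ with $\lambda=\langle z,f\rangle$. Hence
\[
\xi_n:=f^{\circ n}-\lambda f^{\circ(n+1)}
\]
is a reverse martingale difference sequence with $|\xi_n|\le 1+|\lambda|$.

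Abel summation then gives
\[
\sum_{n=1}^N a_n f^{\circ n}=\sum_{n=1}^N b_n\xi_n+\lambda b_N f^{\circ(N+1)},\qquad b_n=\sum_{k\le n}\lambda^{n-k}a_k .
\]
By orthogonality, $\sigma_N^2=(1-|\lambda|^2)\sum|b_n|^2+|\lambda b_N|^2$. Since the map $a\mapsto b$ is a convolution with the geometric kernel, Young's inequality gives $\sum|b_n|^p\le(1-|\lambda|)^{-p}\sum|a_n|^p$. The same computation gives the reverse bound $\sigma_N^2\ge c_\lambda\sum|a_n|^2$, because the symbol $1/(1-\lambda e^{i\theta})$ has modulus at least $1/(1+|\lambda|)$.

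\emph{Step 2: removing the boundary term.} The remainder satisfies $|R_N|\le |\lambda||b_N|/\sigma_N$, and $b_N=\sum_n\lambda^{N-n}a_n$. For $|R|\le\varepsilon$ one has
\[
\PP(X\le x-\varepsilon)\le\PP(X+R\le x)\le \PP(X\le x+\varepsilon).
\]
Combining this with the fact that the $\cal N(0,1/2)$ density is at most $1/\sqrt\pi$ produces the term $2.7\,\frac{|\lambda|}{1-|\lambda|}\,|b_N|/\sqrt{\sum|a_n|^2}$, where the constant comes from $c_\lambda$. The leading martingale part has variance $(1-|\lambda|^2)\sum|b_n|^2/\sigma_N^2$ rather than $1$. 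The mismatch is $O(|b_N|^2/\sigma_N^2)$, which is again dominated by the second term, since a variance mismatch of size $\eta$ costs $O(\eta)$ in Kolmogorov distance.

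\emph{Step 3: martingale Berry--Ess\'een.} For fixed $\alpha\in\T$, set $d_n=\mathrm{Re}(\alpha b_n\xi_n)/s_N$, where the normalisation $s_N$ makes $\sum\E d_n^2=1/2$. I would apply Haeusler's martingale CLT rate,
\[
\sup_x|\PP(\textstyle\sum d_n\le x)-\PP(\cal N(0,1/2)\le x)|\le C_\delta\big(L_N+V_N\big)^{1/(3+2\delta)},
\]
with $L_N=\sum\E|d_n|^{2+2\delta}$ and $V_N=\E\big|\sum\E[d_n^2\mid\F_{n+1}]-\tfrac12\big|^{1+\delta}$. The first term is easy: since $\sum|b|^{2+2\delta}\le(\sum|b|^4)^{(1+\delta)/2}$ for $\delta\ge1$, we get $L_N\lesssim(\sum|b|^4)^{(1+\delta)/2}/(\sum|b|^2)^{1+\delta}$. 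All constants are independent of $\alpha$, which gives the supremum over $\T$.

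\emph{Step 4: the conditional variance (main obstacle).} Expanding $\mathrm{Re}(\alpha b\xi)^2$, the conditional variance equals a constant plus
\[
\mathrm{Re}\Big(\alpha^2\sum_n b_n^2\, g\circ f^{\circ(n+1)}\Big),\qquad g:=P(\xi^2)\ \text{read on the base}.
\]
Here $g$ is bounded, analytic, and of mean zero; the constant term is exactly $\tfrac12$ after normalisation. The key estimate is the decay of correlations
\[
|\langle g\circ f^{\circ n},g\circ f^{\circ m}\rangle|\le C_f|\lambda|^{|n-m|}\|g\|^2,
\]
valid for mean-zero $H^2$ functions by iterating $P$ on analytic functions, since $P(z^k)$ has a factor $\lambda$. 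This yields $\|\sum b_n^2 g\circ f^{\circ(n+1)}\|_2^2\lesssim\sum|b_n|^4$.

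To reach the $(1+\delta)$-th moment I would interpolate this $L^2$ bound with an $L^{2k}$ bound obtained from the same martingale structure; for instance, applying Burkholder to the analogous decomposition of $\sum b_n^2 g\circ f^{\circ n}$ gives $L^p$ norms $\lesssim(\sum|b|^4)^{1/2}$ for all $p$. This yields
\[
V_N\lesssim\frac{(\sum|b|^4)^{(1+\delta)/2}}{(\sum|b|^2)^{1+\delta}}.
\]
Inserting this into Haeusler's bound and converting $b$ back to $a$ via Step 1 gives the first term of \eqref{eq-rate-an} with exponents $\frac{1+\delta}{6+4\delta}$ and $-\frac{1+\delta}{3+2\delta}$. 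I expect the moment bound in Step 4 to be the delicate part, since it requires uniform control of $P$ on the nonlinear functions $g$ and their powers.
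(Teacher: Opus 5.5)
Steps 1--3 coincide with the paper's proof: the same reverse martingale differences $\xi_n=f^{\circ n}-\lambda f^{\circ(n+1)}$, the same Abel summation with $b_n=\sum_{k\le n}\lambda^{n-k}a_k$, Haeusler's bound, and the same sandwich for the boundary term. You assert the constant $2.7$ rather than derive it. In the paper it comes from $\sup_x|\Phi(px+q)-\Phi(x)|\le 1.35(|p-1|+|q|)$, using $|p_N-1|,|q_N|\le \frac{|\lambda|}{\sqrt{1-|\lambda|^2}}\frac{|b_N|}{\rho_N}$ and $\sqrt{\sum_{n\le N}|a_n|^2}\le(1+|\lambda|)\rho_N$.

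The genuine gap is Step 4, which you flag yourself. You never compute $g=P(\xi^2)$, and the tools you propose for a general bounded analytic mean-zero $g$ fail in that generality.
\begin{itemize}
\item The decay estimate $|\langle g\circ f^{\circ n},g\circ f^{\circ m}\rangle|\le C_f|\lambda|^{|n-m|}\|g\|_2^2$ is false. For $f(z)=z^2$ (so $\lambda=0$) and $g(z)=z+z^2$ one has $\langle g\circ f,g\rangle=1$.
\item Its justification is also false. One has $P(z^2)=\mu w+\lambda^2w^2$ with $\mu=\overline{f''(0)}/2$, and for $f(z)=z^2$ this is just $w$, with no factor of $\lambda$.
\item ``The analogous decomposition'' of $\sum b_n^2\, g\circ f^{\circ n}$ has no evident meaning for nonlinear $g$.
\end{itemize}
So your bound on $V_N$ is not established as written.

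The missing observation is that $g$ is linear. From $P(z^2)=\mu w+\lambda^2w^2$, $P(zf)=f\,P(z)=\lambda w^2$ and $P(f^2)=w^2$ you get $P(\xi^2)=\mu w$. Equivalently, $\E[\xi_n^2\mid\F_{n+1}]=\mu f^{\circ(n+1)}$, which is Lemma~\ref{lem-Yn}\ref{item:Yn2} of the paper. Hence
$\sum_n\E[d_n^2\mid\F_{n+1}]-\frac12=\frac{1}{2s_N^2}\mathrm{Re}\big(\alpha^2\mu\sum_n b_n^2f^{\circ(n+1)}\big)$,
which is a linear combination of iterates of $f$ itself with coefficients $b_n^2$. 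What you need is then the Khintchine-type bound $\|\sum_n c_nf^{\circ n}\|_{p}\lesssim_{p,f}(\sum_n|c_n|^2)^{1/2}$ with $p=1+\delta$. The paper simply quotes this from \cite[Corollary 1.4]{Nic22}.

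Once $g$ is identified, your Burkholder idea proves this bound directly, with no interpolation needed:
\begin{itemize}
\item Apply your Step 1 decomposition to the coefficients $c_n$, which gives $\sum_n c'_n\xi_n+\lambda c'_Nf^{\circ(N+1)}$ with $c'_n=\sum_{k\le n}\lambda^{n-k}c_k$.
\item Since $|\xi_n|\le 1+|\lambda|$, the square function is pointwise at most $(1+|\lambda|)\|c'\|_{\ell^2}\le\frac{1+|\lambda|}{1-|\lambda|}\|c\|_{\ell^2}$, so Burkholder controls the martingale part in every $L^p$.
\item The boundary term satisfies $|\lambda c'_N|\le\|c\|_{\ell^2}/\sqrt{1-|\lambda|^2}$.
\end{itemize}
With $c_n=b_n^2$ this gives $V_N\lesssim_{\delta,\lambda}\big(\sum_n|b_n|^4\big)^{(1+\delta)/2}\rho_N^{-2-2\delta}$. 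Your Young-inequality and lower-bound conversions from $b$ back to $a$ then yield the first term of \eqref{eq-rate-an}. This would be a self-contained replacement for the paper's citation.
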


	We point out that Theorem \ref{thm-BE-inner} provides a refinement of the Central Limit Theorem (CLT) for inner functions (Theorem \ref{thm-NS} below) recently established by A. Nicolau and O. Soler i Gibert \cite{NS22,NS24}. Indeed, if the sequence \((a_n)_{n=1}^\infty \) satisfies the conditions in \eqref{eq-Lindeberg-cond}, then the quantity in \eqref{eq-rate-an} tends to zero for any fixed \(\delta \geq 1\), thereby establishing the convergence in distribution: $$\frac{1}{\sigma_N} \sum_{n=1}^N a_n f^{\circ n} \stackrel{d}{\longrightarrow} \cal{CN}(0,1).$$ Moreover, in the constant coefficient case $a_n \equiv 1$, the bound \eqref{eq-rate-an} provides a decay rate of $O(N^{-\frac{1}{4} + \varepsilon})$ for any $\varepsilon > 0$. An interesting problem is to determine the optimal rate of decay of the errors arising in the convergence in distribution in the central limit theorem for inner functions. 
	

	The proof of Theorem \ref{thm-BE-inner} is based on an elementary transfer argument together with classical results in martingale theory. In particular, our method also yields a short proof of Nicolau and Soler i Gibert's CLT. 

	\noindent\textbf{The structure of this paper:} Since the martingale CLT is considerably more elementary and more familiar to a general audience than its Berry--Ess\'{e}en counterpart, we prefer to first present our short proof of Nicolau and Soler i Gibert's CLT, and then demonstrate how a simple modification of the argument leads to Theorem \ref{thm-BE-inner}.

	\section{A simple proof of the central limit theorem for inner functions}

	In this section, we present a simple proof of the following

		\begin{theorem}[Nicolau and Soler i Gibert]\label{thm-NS}
		Let $f$ be an inner function with $f(0)=0$ which is not a rotation. Let $(a_n)_{n=1}^\infty$ be a sequence of complex numbers satisfying the conditions in \eqref{eq-Lindeberg-cond}, and set $$\sigma_N:= \bigg\| \sum_{n=1}^N a_n f^{\circ n }\bigg\|_{L^2(\T)}.$$ Then the following convergence in distribution holds: 
		\begin{align}\label{eq-convergence-distribution}\frac{1}{\sigma_N} \sum_{n=1}^N a_n f^{\circ n }\stackrel{d}{\longrightarrow} \cal{CN}(0,1).
		\end{align}
	\end{theorem}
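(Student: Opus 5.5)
Since $f(0)=0$, we set $\lambda:=f'(0)$; as $f$ is not a rotation, Schwarz's lemma gives $|\lambda|<1$. The plan is to transfer the problem to a reverse martingale. Let $\mathcal F_n:=(f^{\circ n})^{-1}(\mathcal B)$, a decreasing filtration. The key identity is that for $g\in L^1(\T)$,
\[
\E[g\circ f^{\circ k}\mid \mathcal F_{k+1}] = (P g)\circ f^{\circ (k+1)},
\]
where $P$ is the transfer (Aleksandrov--Clark expectation) operator of $f$. For the identity function this gives $P(z)=\lambda z$, since the conditional expectation of $z$ given $f$ is $\lambda f$: the Poisson extension of $z$ averaged over the Aleksandrov--Clark measures is computed from $f(z)/z$ at $0$. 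Concretely, $\E[f^{\circ n}\mid \mathcal F_{n+1}]=\lambda f^{\circ(n+1)}$. From this we get $\langle f^{\circ n},f^{\circ m}\rangle=\bar\lambda^{\,m-n}$ for $m\ge n$, and $\E[(f^{\circ n})^2\mid\mathcal F_{n+1}]$ and $\E[|f^{\circ n}|^2\mid\mathcal F_{n+1}]=1$ can also be computed. We also use the analyticity fact that $\E[(f^{\circ n})^j \overline{(f^{\circ m})^k}]$ vanishes unless the degrees balance appropriately, in the sense of pseudo-variance zero.

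Next we write $f^{\circ n}=d_n+\lambda f^{\circ(n+1)}$ with $d_n:=f^{\circ n}-\lambda f^{\circ (n+1)}$. Then $d_n$ is $\mathcal F_n$-measurable with $\E[d_n\mid\mathcal F_{n+1}]=0$, so it is a reverse martingale difference sequence. Iterating, $f^{\circ n}=\sum_{k\ge n} \lambda^{k-n} d_k$, and hence
\[
S_N=\sum_{n=1}^N a_n f^{\circ n}=\sum_{k=1}^N b_k d_k + R_N, \qquad b_k=\sum_{n\le k}\lambda^{k-n}a_n,
\]
with remainder $R_N=\lambda\, b_N f^{\circ(N+1)}$. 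Since $\|R_N\|_2\le |\lambda||b_N|$ and $|b_N|\le \frac{1}{1-|\lambda|}\max$-type averages, the condition \eqref{eq-Lindeberg-cond} together with a Cauchy--Schwarz estimate on the geometric weights gives $|b_N|=o(\sqrt{\sum|a_n|^2})$. We also need $\sigma_N^2\asymp\sum|a_n|^2$. This follows since $\E|d_k|^2=1-|\lambda|^2$ and the $d_k$ are orthogonal, so $\sigma_N^2\approx(1-|\lambda|^2)\sum|b_k|^2$, and $\sum|b_k|^2\ge c\sum|a_n|^2$ because $a_k=b_k-\lambda b_{k-1}$. Thus $R_N/\sigma_N\to0$ in $L^2$.

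It remains to apply the martingale CLT (reversing time to get a forward martingale array) to $\alpha$-projections $\mathrm{Re}(\alpha \sum b_kd_k)/\sigma_N$ via Cramér--Wold. Two conditions are needed. The first is the conditional Lindeberg condition. Since $|d_k|\le 1+|\lambda|$ is bounded, it reduces to $\max_k|b_k|^2/\sum|b_k|^2\to0$, which follows from the second condition in \eqref{eq-Lindeberg-cond} and the geometric averaging. The second is convergence in probability of the conditional variances, $\sum_k|b_k|^2\E[|d_k|^2\mid\mathcal F_{k+1}]$ and $\sum_k b_k^2\E[d_k^2\mid\mathcal F_{k+1}]$, normalized by $\sigma_N^2$, to $1$ and $0$ respectively. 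We compute $\E[|d_k|^2\mid\mathcal F_{k+1}]=1-|\lambda|^2|f^{\circ(k+1)}|^2=1-|\lambda|^2$, which is constant. For the pseudo-variance, $\E[d_k^2\mid\mathcal F_{k+1}]=h\circ f^{\circ(k+1)}$ with $h=P(z^2)-\lambda^2z^2$, an analytic-type function with mean $0$, i.e. $h(0)=0$.

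The main obstacle is showing $\frac{1}{\sigma_N^2}\sum_k b_k^2\, h\circ f^{\circ(k+1)}\to0$ in $L^2$. My approach is to expand the $L^2$ norm as a double sum and use decay of correlations: $\langle h\circ f^{\circ j},h\circ f^{\circ i}\rangle=\langle P^{i-j}h,h\rangle$ for $i\ge j$. Here $P$ acts on $H^2$ functions vanishing at $0$ with a contraction rate of roughly $|\lambda|^{i-j}$; more precisely, $P$ preserves the analytic subspace $zH^2$ and has spectral radius $\le|\lambda|$ there. This bounds the sum by $C\sum_{i,j}|b_i|^2... |\lambda|^{|i-j|}$. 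By Schur's test the double sum is $O(\sum|b_k|^4)$, and $\sum|b_k|^4\le\max|b_k|^2\sum|b_k|^2=o((\sum|b_k|^2)^2)$, which completes the proof.
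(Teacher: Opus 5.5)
Your architecture is the paper's. Your $d_n$ is the paper's $Y_n$, and your $b_k$ and remainder $\lambda b_N f^{\circ(N+1)}$ are those of Proposition \ref{lem-inner=Yn+remainder}. Via the time-reversed triangular-array martingale CLT, you reduce everything to showing $\sigma_N^{-2}\sum_k b_k^2\, h\circ f^{\circ(k+1)}\to 0$.

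The gap is in how you justify this step, which you rightly call the main one. The claim that $P$ has spectral radius at most $|\lambda|$ on $zH^2$ is false. For $g,u\in zH^2$ one has $\la Pg,u\ra=\la g,u\circ f\ra$, so $P|_{zH^2}$ is the adjoint of the composition operator $u\mapsto u\circ f$. That operator is an isometry of $zH^2$, since $f$ is inner with $f(0)=0$. Hence $\|P^m\|=1$ for all $m$ (indeed $P^m(f^{\circ m})=z$), and the spectral radius is $1$. Triangularity of $P$ in the basis $(z^n)_{n\ge 1}$ with diagonal $\overline{f'(0)}^{\,n}$ does not control the spectrum of an infinite matrix. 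Nor can any argument using only $h\in zH^2$, $h(0)=0$, give the $|\lambda|^{|i-j|}$ bound. Take $f(z)=z^2$ (so $\lambda=0$) and $h=\sum_{j\ge1}j^{-1}z^{2^j}$. Then $P^m h=\sum_{j\ge m}j^{-1}z^{2^{j-m}}$, and $\la P^m h,h\ra=\sum_{i\ge1}\frac{1}{i(i+m)}\sim\frac{\log m}{m}$, which is not exponentially small.

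The repair is to compute $h$, which you said ``can be computed'' but never did. The coefficient $\la z^2,f^k\ra=\overline{\widehat{f^k}(2)}$ equals $\mu=\overline{f''(0)}/2$ for $k=1$, $\overline{f'(0)}^{\,2}$ for $k=2$, and $0$ otherwise. Hence $P(z^2)=\mu z+\overline{f'(0)}^{\,2}z^2$, so $h=\mu z$, i.e.\ $\E[d_k^2|\F_{k+1}]=\mu f^{\circ(k+1)}$ (Lemma \ref{lem-Yn}\ref{item:Yn2}). Now $z$ is an eigenfunction of $P$, and the Gram matrix involved is the one you already have, $|\la f^{\circ i},f^{\circ j}\ra|=|\lambda|^{|i-j|}$. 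Schur's test gives $\|\sum_k b_k^2 f^{\circ(k+1)}\|_2^2\le\frac{1+|\lambda|}{1-|\lambda|}\sum_k|b_k|^4$, and your bound $\sum_k|b_k|^4\le\max_k|b_k|^2\sum_k|b_k|^2$ finishes the argument. This is exactly the paper's route: the upper bound in \eqref{eq-2-norm-eqvalence} applied to the coefficients $b_n^2$ (Lemma \ref{lem-weak-law}), with your max-bound replacing the Stolz--Ces\`aro step.

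Also fix a conjugation slip. Since $\la z,f\ra=\int z\overline{f}\,dm=\overline{f'(0)}$, we have $\E[f^{\circ n}|\F_{n+1}]=\overline{f'(0)}\,f^{\circ(n+1)}$. With your convention $\lambda=f'(0)$, the differences must therefore be $f^{\circ n}-\bar\lambda f^{\circ(n+1)}$. Your formula $\la f^{\circ n},f^{\circ m}\ra=\bar\lambda^{\,m-n}$ already presupposes this. Only $|\lambda|$ enters the estimates, so the slip is harmless once corrected.
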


	We note that the normal approximation \eqref{eq-convergence-distribution} was first established in \cite{NS22} under certain conditions on the size of the coefficient sequence $(a_n)_{n=1}^\infty$, and subsequently refined in \cite{NS24} to the sharp form stated in Theorem \ref{thm-NS}. The original proof relies on the theory of Aleksandrov-Clark measures as the main technical tool to establish the pointwise convergence of the characteristic functions.

	\subsection{The reverse martingale differences}
	
	Let $f$ be an inner function with $f(0)=0$, which is not a rotation. We denote by $\F_0$ the $\sigma$-algebra consisting of all Lebesgue measurable sets on $\mathbb{T}$, and let $\F_n$ be the complete sub-$\sigma$-algebra of $\F_0$ generated by $f^{\circ n}$. Clearly, the sequence $(\F_n)_{n=0}^\infty$ is decreasing: $$\F_0 \supset  \F_1 \supset \F_2 \supset \cdots.$$ To simplify notation, we set $\lambda := \overline{f'(0)}$ and $\mu:=\ol{f''(0)}/2$. Note that (by the Schwarz--Pick lemma)  $$|\lambda|<1, \quad \text{and } \, |\mu|\leq 1-|\lambda|^2.$$
	
	Now we define 
	\begin{align}\label{eq-def-Yn}
		Y_n:= f^{\circ n}-\lambda f^{\circ (n+1)}.
	\end{align} It turns out that $(Y_n)_{n=1}^\infty$ naturally forms a \emph{reverse} martingale difference sequence. The details are presented in the following lemma.

	\begin{lemma}\label{lem-Yn}
		The sequence $(Y_n)_{n=1}^\infty$ forms a reverse martingale difference sequence with respect to $(\F_n)_{n=0}^\infty$, that is, $$Y_n \text{ is $\F_n$-measurable}, \,\, \text{and }\,\, \E[Y_n|\F_{n+1}] = 0.$$
		Moreover, we have the following identities:
		\begin{enumerate}[label=(\roman*),font=\normalfont]
			\item \label{item:|Yn|2} $\E\big[|Y_n|^2 | \F_{n+1}\big]=\E\big[|Y_n|^2\big] = 1-|\lambda|^2.$
			\item \label{item:Yn2} $\E\big[Y_n^2| \F_{n+1}\big]=\mu f^{\circ (n+1)}$.
			\item \label{item:reYn2} For any $\alpha \in \mathbb C$, $$\E\big[({\rm Re}\, \alpha Y_n)^2 |\mathscr F_{n+1}\big]=\frac{1-|\lambda|^2}{2}|\alpha|^2 + \frac{{\rm Re}\,(\alpha^2 \mu f^{\circ (n+1)})}{2}.$$
		\end{enumerate}
		
	\end{lemma}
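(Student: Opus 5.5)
The whole argument rests on one transfer principle. Since $f(0)=0$, the boundary map preserves $m$, so for any bounded measurable $g,h$ on $\T$ and any $k\geq 0$,
$$\E\big[g\circ f^{\circ (k+1)}\cdot h\circ f^{\circ k}\big]=\E\big[g\circ f\cdot h\big].$$
Consequently, conditional expectations with respect to $\F_{n+1}$ reduce to conditional expectations with respect to $\F_1=\sigma(f)$, transported by $f^{\circ n}$. Concretely, if $\E[h\,|\,\F_1]=\phi\circ f$, then
$$\E[h\circ f^{\circ n}\,|\,\F_{n+1}]=\phi\circ f^{\circ (n+1)}.$$
To check this, pair both sides with $g\circ f^{\circ(n+1)}$ and use invariance of $m$ under $f^{\circ n}$ to move back to the case $n=0$. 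Measurability is immediate: $Y_n=f^{\circ n}-\lambda f\circ f^{\circ n}$ is a function of $f^{\circ n}$, hence $\F_n$-measurable.

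It therefore suffices to compute $\E[z\,|\,\F_1]$, $\E[z^2\,|\,\F_1]$ and $\E[|z-\lambda f|^2\,|\,\F_1]$, where $z$ is the identity on $\T$. For this I use Aleksandrov--Clark / Poisson-integral reasoning, or more elementarily the following Fourier computation. Because $f$ preserves $m$, the functions $f^k$ ($k\in\mathbb Z$) form an orthonormal basis of $L^2(\T,\F_1)$, and
$$\E[h\,|\,\F_1]=\sum_k \la h,f^k\ra f^k .$$

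For $h=z$ we get $\la z,f^k\ra=\int z\,\ol{f}^k\,dm$. For $k\leq 0$ this is $\int z f^{|k|}\,dm=0$, since the integrand is analytic and vanishes at $0$. For $k\geq 1$ it equals $\ol{\int \ol z f^k\,dm}$, the conjugate of the first Taylor coefficient of $f^k$. Since $f(0)=0$, that coefficient is $f'(0)$ for $k=1$ and $0$ for $k\geq2$. Hence $\E[z\,|\,\F_1]=\ol{f'(0)}f=\lambda f$, and therefore $\E[Y_0\,|\,\F_1]=0$. Transferring gives $\E[Y_n\,|\,\F_{n+1}]=0$.

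For $h=z^2$ the same argument applies: $\la z^2,f^k\ra$ is the conjugate of the second Taylor coefficient of $f^k$. This is $f''(0)/2$ for $k=1$ and $f'(0)^2$ for $k=2$. So
$$\E[z^2\,|\,\F_1]=\mu f+\lambda^2 f^2 .$$
Using this together with $\E[z\,|\,\F_1]=\lambda f$, we expand
$$\E[(z-\lambda f)^2\,|\,\F_1]=\mu f+\lambda^2f^2-2\lambda f\cdot\lambda f+\lambda^2 f^2=\mu f,$$
which is (ii) after transfer. For (i) we use $|z|=|f|=1$ a.e.:
$$|z-\lambda f|^2=1+|\lambda|^2-2\,{\rm Re}(\ol{\lambda}\,\ol f z).$$
Its conditional expectation is
$$1+|\lambda|^2-2\,{\rm Re}(\ol\lambda\,\ol f\cdot\lambda f)=1-|\lambda|^2,$$
a constant, so it equals the unconditional expectation.

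Finally, (iii) follows from (i) and (ii) via
$$({\rm Re}\,w)^2=\tfrac12|w|^2+\tfrac12{\rm Re}(w^2)$$
with $w=\alpha Y_n$.

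The main obstacle is justifying the Fourier/transfer step rigorously. This needs two facts: that $\{f^k\}_{k\in\mathbb Z}$ is orthonormal, which holds because $\int f^k\,dm=f(0)^k=0$ for $k\geq1$; and that it is complete in $L^2(\F_1)$, which holds because $\F_1$-measurable $L^2$ functions are of the form $\phi\circ f$ with $\phi\in L^2(\T)$ (by invariance of $m$), and $\phi$ expands in the trigonometric basis. I expect these two points, rather than any of the algebra, to require the most care.
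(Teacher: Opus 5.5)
Your proposal is correct and follows essentially the paper's argument: expand in the orthonormal basis of powers of an inner function, evaluate $\langle z,f^k\rangle$ and $\langle z^2,f^k\rangle$ via Taylor coefficients at $0$, and then expand $|Y_n|^2$, $Y_n^2$ and $({\rm Re}\,\alpha Y_n)^2$. The only difference is cosmetic. You compute the conditional expectations for $\F_1$ and transport them by $f^{\circ n}$ via your transfer lemma, whereas the paper expands directly in powers of $\eta=f^{\circ(n+1)}$ and uses invariance of $m$ under $f^{\circ n}$ to reduce the coefficients to $\langle z^j,f^k\rangle$.
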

	
		The proof of Lemma \ref{lem-Yn} requires only the following standard result: if $\eta$ is an inner function vanishing at the origin, then the collection $$\{\eta^k : k\in \mathbb Z \}$$ constitutes an orthonormal basis for $L^2(\F(\eta)):=\{f\in L^2(\T): \text{$f$ is $\F(\eta)$-measurable}\}$, where $\F(\eta)$ is the complete sub $\sigma$-algebra generated by $\eta$. Since the conditional expectation $\E\big[ \cdot | \F(\eta)\big]$ acts as the orthogonal projection from  $L^2(\T)$ onto $L^2(\F(\eta))$, it holds that $$\E\big[ h| \F(\eta)\big] = \sum_{k\in \mathbb Z} \, \big\la h, \eta^k \big\ra \cdot \eta^k,\quad h\in L^2.$$ The rest of the proof of Lemma \ref{lem-Yn} consists of standard calculations. To keep the exposition focused, we have postponed it (and the proof of Lemma \ref{lem-transfer} below) to the end of this note.

		\subsection{The transfer argument}
		
		The second key ingredient is the following elementary identity, which rewrites a linear combination of iterates of inner functions as a linear combination of reverse martingale differences plus a remainder term.

	\begin{proposition}\label{lem-inner=Yn+remainder}
	For any complex sequence $(a_n)_{n=1}^\infty$, and any positive integer $N$, it holds that: 
		\begin{align}\label{eq-sum-to-martingale}
			\sum_{n=1}^N a_n f^{\circ n}=\sum_{n=1}^N b_n Y_n + \lambda b_N f^{\circ (N+1)}, 
		\end{align}
		where the coefficients $b_n$ are given by 
		\begin{align}\label{eq-def-bn}
			b_n=\sum_{k=1}^n \lambda^{n-k} a_k, \quad n\geq 1.
		\end{align}
	\end{proposition}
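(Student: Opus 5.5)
This is a purely algebraic identity, so I will prove it by summation by parts (an Abel-type rearrangement), with no use of the probabilistic structure from Lemma \ref{lem-Yn}. The key relation is the recursion satisfied by the coefficients \eqref{eq-def-bn}:
\[
b_n = a_n + \lambda b_{n-1}, \quad n\geq 1, \qquad \text{with the convention } b_0 := 0.
\]
This follows at once by splitting off the $k=n$ term in $b_n=\sum_{k=1}^n \lambda^{n-k}a_k$. Equivalently, $a_n = b_n - \lambda b_{n-1}$ for all $n\geq 1$.

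Next I substitute this into the left-hand side of \eqref{eq-sum-to-martingale}:
\[
\sum_{n=1}^N a_n f^{\circ n} = \sum_{n=1}^N b_n f^{\circ n} - \lambda\sum_{n=1}^{N} b_{n-1} f^{\circ n} = \sum_{n=1}^N b_n f^{\circ n} - \lambda\sum_{n=0}^{N-1} b_{n} f^{\circ (n+1)}.
\]
In the last sum the $n=0$ term vanishes because $b_0=0$. I then add and subtract the missing $n=N$ term $\lambda b_N f^{\circ(N+1)}$, which gives
\[
\sum_{n=1}^N b_n\big(f^{\circ n}-\lambda f^{\circ(n+1)}\big) + \lambda b_N f^{\circ(N+1)}.
\]
By the definition \eqref{eq-def-Yn} of $Y_n$, this is exactly the right-hand side of \eqref{eq-sum-to-martingale}.

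There is no genuine obstacle. The only points needing care are the index shift and the boundary terms: the vanishing of $b_0$ at the lower end and the leftover term $\lambda b_N f^{\circ(N+1)}$ at the upper end. As an alternative check, one could argue by induction on $N$. The case $N=1$ reads $a_1 f = a_1(f-\lambda f^{\circ 2}) + \lambda a_1 f^{\circ 2}$. For the inductive step, passing from $N$ to $N+1$ adds $a_{N+1}f^{\circ(N+1)}$ to the left side, and on the right one uses $\lambda b_N + a_{N+1} = b_{N+1}$ to rewrite $\lambda b_N f^{\circ (N+1)} + a_{N+1} f^{\circ(N+1)}$ as $b_{N+1}Y_{N+1} + \lambda b_{N+1} f^{\circ(N+2)}$.
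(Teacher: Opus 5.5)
Your proof is correct and matches the argument the paper has in mind. The paper states this identity without a separate proof, but its proof of Lemma~\ref{lem-transfer} uses your relation $a_1=b_1$, $a_n=b_n-\lambda b_{n-1}$ and the same summation-by-parts identity $A_N(z)=(1-\lambda z)B_N(z)+\lambda b_N z^{N+1}$, which is \eqref{eq-sum-to-martingale} with $z^n$ in place of $f^{\circ n}$; your handling of the boundary terms and your inductive check are both sound.
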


One can easily show that if the original coefficients satisfy the conditions in \eqref{eq-Lindeberg-cond}, then so do the resulting coefficients:
	
	\begin{lemma}\label{lem-transfer}
		Let $a=(a_n)_{n=1}^\infty$ be any complex sequence, and let $b=(b_n)_{n=1}^\infty$ be the associated sequence defined as in \eqref{eq-def-bn}. Then the following claims hold true: 
		\begin{enumerate}[label=(\roman*),font=\normalfont]
			\item \label{item:keep-Lindberg}If $$\lim_{N\to \infty} \frac{\max_{1\leq n \leq N} |a_n|^2}{\sum_{n=1}^N |a_n|^2} = 0,$$ then $$\lim_{N\to \infty} \frac{\max_{1\leq n \leq N} |b_n|^2}{\sum_{n=1}^N |b_n|^2} =0.$$
			\item \label{item:keep-no-ell2} If $a\notin \ell^2$, then $b\notin \ell^2$.
			\item \label{item:keep-ell2}If $a\in \ell^2$, then $b\in \ell^2$ and $$\frac{1}{1+|\lambda|}\cdot \|a\|_{\ell^2} \leq \|b\|_{\ell^2} \leq \frac{1}{1-|\lambda|}\cdot \|a\|_{\ell^2}.$$
		\end{enumerate}
	\end{lemma}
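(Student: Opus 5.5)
The plan is to treat $b$ as the image of $a$ under a convolution operator and to read everything off Young-type inequalities and the recursion $b_n = \lambda b_{n-1} + a_n$ (with $b_0:=0$). Write $b = K a$, where $(Ka)_n = \sum_{k\le n}\lambda^{n-k}a_k$. Extend $a$ by zero beyond $N$; then Young's inequality (or Schur's test with row and column sums $\sum_j |\lambda|^j = (1-|\lambda|)^{-1}$) gives the finite-$N$ bounds
\[
\Big(\sum_{n=1}^N|b_n|^2\Big)^{1/2}\le \frac{1}{1-|\lambda|}\Big(\sum_{n=1}^N|a_n|^2\Big)^{1/2},\qquad \max_{n\le N}|b_n|\le \frac{1}{1-|\lambda|}\max_{n\le N}|a_n|.
\]
Conversely, $a_n=b_n-\lambda b_{n-1}$, so by the triangle inequality in $\ell^2$
\[
\Big(\sum_{n=1}^N|a_n|^2\Big)^{1/2}\le (1+|\lambda|)\Big(\sum_{n=1}^N|b_n|^2\Big)^{1/2}.
\]
These truncated inequalities are the backbone of all three items.

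Items \ref{item:keep-ell2} and \ref{item:keep-no-ell2} follow at once. Letting $N\to\infty$ in the two $\ell^2$ bounds gives both inequalities of \ref{item:keep-ell2}. For \ref{item:keep-no-ell2}, the lower bound $\sum_{n\le N}|b_n|^2\ge (1+|\lambda|)^{-2}\sum_{n\le N}|a_n|^2\to\infty$ does the job.

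Item \ref{item:keep-Lindberg} is the only step needing more than crude estimates, since the numerator bound must be relative rather than absolute. Set $A_N=\sum_{n\le N}|a_n|^2$ and $M_N=\max_{n\le N}|a_n|^2$, so $M_N/A_N\to0$. By the lower bound above, $\sum_{n\le N}|b_n|^2\ge A_N/(1+|\lambda|)^2$. For the numerator, Cauchy--Schwarz with weights $|\lambda|^{n-k}$ gives
\[
|b_n|^2\le \frac{1}{1-|\lambda|}\sum_{k\le n}|\lambda|^{n-k}|a_k|^2\le \frac{M_N}{(1-|\lambda|)^2}\quad (n\le N).
\]
Hence
\[
\frac{\max_{n\le N}|b_n|^2}{\sum_{n\le N}|b_n|^2}\le \frac{(1+|\lambda|)^2}{(1-|\lambda|)^2}\cdot\frac{M_N}{A_N}\to0 .
\]
The main obstacle is simply ensuring the upper bound on $\max_{n\le N}|b_n|$ involves only $a_1,\dots,a_N$. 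This holds because $b$ is causal: $b_n$ depends only on $a_k$ with $k\le n$.

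A remark for the later use of this lemma: when $A_N\to\infty$, the hypothesis of \ref{item:keep-Lindberg} is equivalent to the second condition in \eqref{eq-Lindeberg-cond}. Indeed, $M_N$ is attained at some index $n_N\le N$, and either $n_N$ stays bounded, in which case $M_N$ is eventually constant and $M_N/A_N\to0$, or the ratio $|a_{n_N}|^2/A_{n_N}$, which dominates $M_N/A_N$, tends to zero. This equivalence lets one apply the lemma directly under \eqref{eq-Lindeberg-cond}.
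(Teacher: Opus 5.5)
Your proof is correct and follows essentially the same route as the paper. The lower bound comes from $a_n=b_n-\lambda b_{n-1}$, and the $\ell^2$ upper bound with constant $(1-|\lambda|)^{-1}$ is a convolution/multiplier estimate: you use Young or Schur, while the paper uses $|1-\lambda z|\geq 1-|\lambda|$ on $\T$ with Parseval. Item (i) rests on $|b_n|\le \max_{k\le n}|a_k|/(1-|\lambda|)$ in both. Your truncated triangle inequality is slightly cleaner than the paper's step for (i): it avoids the boundary term $\lambda b_N z^{N+1}$ and the ensuing $\liminf$ argument, and gives directly the non-asymptotic bound $\frac{\max_{n\le N}|b_n|^2}{\sum_{n\le N}|b_n|^2}\le \frac{(1+|\lambda|)^2}{(1-|\lambda|)^2}\cdot\frac{\max_{n\le N}|a_n|^2}{\sum_{n\le N}|a_n|^2}$ for every $N$.
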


	Note that if $a\notin \ell^2$, then $$\lim_{N\to \infty} \frac{|a_N|^2}{\sum_{n=1}^N |a_n|^2}=0 \text{\, if and only if \,}\lim_{N\to \infty} \frac{\max_{1\leq n\leq N}|a_n|^2}{\sum_{n=1}^N |a_n|^2}=0.$$ Moreover, in the case $a\in \ell^2$, one can establish the following inequalities:
	\begin{align}\label{eq-2-norm-eqvalence}
		\frac{1-|\lambda|}{1+|\lambda|} \cdot \|a\|^2_{\ell^2} \leq \bigg\|\sum_{n=1}^\infty a_n f^{\circ n}\bigg\|^2_2 \leq \frac{1+|\lambda|}{1-|\lambda|}\cdot \|a\|^2_{\ell^2}.
	\end{align}
	Indeed, for any $a\in \ell^2$, the associated sequence $b$ defined by \eqref{eq-def-bn} is also square summable. Letting $N\to \infty$ in \eqref{eq-sum-to-martingale} yields $$\sum_{n=1}^\infty a_n f^{\circ n} = \sum_{n=1}^\infty b_n Y_n.$$ Since the reverse martingale differences $Y_n$ are pairwise orthogonal and $\E\big[ |Y_n|^2\big] = 1-|\lambda|^2$, we conclude that $$\bigg\|\sum_{n=1}^\infty a_n f^{\circ n}\bigg\|^2_2 = \big( 1- |\lambda|^2\big) \cdot \|b\|^2_2.$$ Combining this with Lemma \ref{lem-transfer}-\ref{item:keep-ell2}, we obtain the inequalities in \eqref{eq-2-norm-eqvalence}. Moreover, since $Y_n \perp f^{\circ (N+1)}$ for each $1\leq n \leq N$, it follows from \eqref{eq-sum-to-martingale} that 
	\begin{align}\label{eq-rhoN-sigmaN} 
		\bigg\|\sum_{n=1}^N a_n f^{\circ n}\bigg\|^2_2 = (1-|\lambda|^2)\sum_{n=1}^N |b_n|^2  + |\lambda|^2 |b_N|^2.
	\end{align}
	
	It should be pointed out that \eqref{eq-2-norm-eqvalence} was established in \cite[Theorem 9]{NS22} through properties of Toeplitz matrices. 
	
	\subsection{A weak law of large numbers for inner functions}

	\begin{lemma}\label{lem-weak-law}
		Let $f$ be an inner function with $f(0)=0$ which is not a rotation. Suppose the complex sequence $(a_n)_{n=1}^\infty$ satisfies $$ S_N:=\sum_{n=1}^N |a_n| \to \infty, \quad \text{and}\quad  \frac{|a_N|}{S_N}\to 0.$$ Then $\frac{1}{S_N} \sum_{n=1}^N a_n f^{\circ n} \stackrel{p}{\longrightarrow} 0$, i.e.  converges to zero in probability. 
	\end{lemma}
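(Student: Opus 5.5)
Set $T_N:=\sum_{n=1}^N a_n f^{\circ n}$. I would show $\E\big[|T_N|^2\big]/S_N^2\to 0$; Chebyshev's inequality then gives convergence to zero in probability. The plan is to compute $\|T_N\|_2^2$ directly from the decomposition in Proposition \ref{lem-inner=Yn+remainder}. Identity \eqref{eq-rhoN-sigmaN} gives
\begin{align*}
\|T_N\|_2^2=(1-|\lambda|^2)\sum_{n=1}^N|b_n|^2+|\lambda|^2|b_N|^2\le \sum_{n=1}^N |b_n|^2 .
\end{align*}
Since $\|b^{(N)}\|_{\ell^2}\le (1-|\lambda|)^{-1}\|a^{(N)}\|_{\ell^2}$ for the truncated sequences (Young's inequality, as in Lemma \ref{lem-transfer}-\ref{item:keep-ell2}), we get
\begin{align*}
\|T_N\|_2^2\le (1-|\lambda|)^{-2}\sum_{n=1}^N|a_n|^2 .
\end{align*}

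It then remains to prove $\sum_{n\le N}|a_n|^2/S_N^2\to 0$. The basic estimate is
\begin{align*}
\sum_{n\le N}|a_n|^2\le \Big(\max_{n\le N}|a_n|\Big)\, S_N .
\end{align*}
So it suffices to show $\max_{n\le N}|a_n|/S_N\to 0$; this step is the main point, though it is routine. Fix $\varepsilon>0$ and choose $N_0$ with $|a_n|\le\varepsilon S_n$ for $n\ge N_0$. For $N\ge N_0$ and $N_0\le n\le N$, monotonicity of $S$ gives $|a_n|\le \varepsilon S_n\le \varepsilon S_N$. For $n<N_0$ we have $|a_n|\le \max_{k<N_0}|a_k|$, and this is at most $\varepsilon S_N$ once $N$ is large, because $S_N\to\infty$. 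Hence $\max_{n\le N}|a_n|\le\varepsilon S_N$ eventually.

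Combining these bounds,
\begin{align*}
\E\big[|T_N|^2\big]/S_N^2\le (1-|\lambda|)^{-2}\max_{n\le N}|a_n|/S_N\to 0 ,
\end{align*}
and Chebyshev's inequality finishes the proof. The non-rotation hypothesis enters only through $|\lambda|<1$, which keeps the constant $(1-|\lambda|)^{-2}$ finite.
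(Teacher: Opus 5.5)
Your proof is correct and takes the paper's route: bound $\|T_N\|_2^2\lesssim_\lambda\sum_{n\le N}|a_n|^2$, show $\sum_{n\le N}|a_n|^2=o(S_N^2)$, and conclude by Chebyshev. You get the first bound from \eqref{eq-rhoN-sigmaN} and Lemma \ref{lem-transfer}-\ref{item:keep-ell2}, which is exactly how \eqref{eq-2-norm-eqvalence} is derived. The only difference is the last elementary step: you use $\sum_{n\le N}|a_n|^2\le \max_{n\le N}|a_n|\cdot S_N$ together with $\max_{n\le N}|a_n|=o(S_N)$ instead of the Stolz--Ces\`aro theorem, which spares you the paper's reductions to $a_n\neq 0$ and $\sum|a_n|^2=\infty$.
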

	\begin{proof} 
		We only need to prove $\frac{1}{S_N} \sum_{n=1}^N a_n f^{\circ n}$ converges to zero in $L^2$. In view of \eqref{eq-2-norm-eqvalence}, it suffices to prove $$\lim_{N\to \infty} \frac{\sum_{n=1}^N |a_n|^2}{S^2_N}=0.$$ Without loss of generality, we may assume $a_n\neq 0$ for each $n\geq 1$ and $\sum_{n=1}^\infty |a_n|^2 = \infty$. Then it follows from the Stolz--Ces\`aro theorem that $$\limsup_{N\to \infty} \frac{\sum_{n=1}^N |a_n|^2}{S^2_N} \leq  \limsup_{N\to \infty} \frac{|a_N|^2}{S_N^2-S^2_{N-1}}= \limsup_{N\to \infty}\frac{|a_N|}{ S_N+S_{N-1}}=0.$$
		The proof is completed. 
	\end{proof}

	\subsection{A central limit theorem for martingale triangular arrays}

	 We need to employ a classical result due to B. M. Brown and G. K. Eagleson \cite[Corollary 1]{BE71}. For the reader's convenience, we provide its precise statement here.

	\begin{Brown-Eagleson Theorem}\label{thm-BE71}
		Consider the following triangular array of real-valued random variables on a probability space $(\Omega, \cal F, \mathbb P)$:
		\begin{align}\label{eq-martingale-array}
		\begin{pmatrix}
 			X_{1,1} & 0 & 0 & \cdots \\
			X_{2,1} & X_{2,2} & 0 & \cdots \\
			X_{3,1} & X_{3,2} & X_{3,3} & \cdots \\
			\vdots & \vdots & \vdots & \ddots
		\end{pmatrix}
	\end{align}
		Let $\{\mathcal{F}_{N,n}: 0\leq n \leq N\}$ be a family of $\sigma$-algebras such that for each $N\geq 1$ fixed, 
		\begin{align}\label{eq-condition-incresing-chain}
			\cal F_{N,0} \subset \cal F_{N,1} \subset \cdots \subset \cal F_{N,N} \subset \cal F, 
		\end{align} 
		and for any $1\leq n\leq N$, 
		\begin{align}\label{eq-condition-martingale} 
			X_{N,n} \text{ is $\cal F_{N,n}$-measurable}, \quad \E\big[ X_{N,n} | \cal F_{N,n-1}\big]=0.
		\end{align}
		Define
		$$
			V^2_N:= \sum_{n=1}^N \E\big[X_{N,n}^2| \cal F_{N,n-1}\big],\quad  m_N:= \max_{1\leq n\leq N} \E\big[X_{N,n}^2| \cal F_{N,n-1}\big].
		$$ 
		If the following three conditions are satisfied as $N\to \infty$: 
		\begin{itemize}
			\item [(C1)] $m_N \stackrel{p}{\longrightarrow} 0$;
			\item [(C2)] $V^2_N \stackrel{p}{\longrightarrow} \sigma^2$ (constant);
			\item [(C3)] For any $\varepsilon>0$, $$\sum_{n=1}^N \E[X_{N,n}^2 I\big(|X_{N,n}| \geq \varepsilon\big)| \cal F_{N,n-1}] \stackrel{p}{\longrightarrow}  0,$$
		\end{itemize}
		 then the $N$-th row sum of \eqref{eq-martingale-array} converges in law to $\cal N(0,\sigma^2)$ as $N\to \infty$: $$X_{N,1}+ \cdots + X_{N,n}\stackrel{d}{\longrightarrow} \cal N(0,\sigma^2).$$
	\end{Brown-Eagleson Theorem}

	For further generalizations of this result, we refer the reader to \cite{Eag75}.

	\subsection{Concluding the proof}

	\begin{proof}[Proof of Theorem \ref{thm-NS}]
		Let $Y_n$ and $b_n$ be defined as in \eqref{eq-def-Yn} and \eqref{eq-def-bn}, respectively. Then by Lemma \ref{lem-transfer}, 
		\begin{align}\label{eq-bn-Lindberg}
			\sum_{n=1}^\infty |b_n|^2=\infty,\quad {\rm and} \quad \lim_{N\to \infty} \frac{|b_N|^2}{\sum_{n=1}^N |b_n|^2}= 0.
		\end{align}
		Combining this with Lemma \ref{lem-Yn}-\ref{item:|Yn|2} and \eqref{eq-sum-to-martingale}, one has $$\lim_{N\to \infty} \frac{\sigma^2_N}{\rho_N^2} = 1-|\lambda|^2, \quad {\rm where}\,\, \rho_N:= \sqrt{\sum_{n=1}^N |b_n|^2}.$$ In virtue of the identity \eqref{eq-sum-to-martingale}, to prove $$\frac{1}{\sigma_N}\sum_{n=1}^N a_nf^{\circ n}\stackrel{d}{\longrightarrow}\cal{CN}(0,1),$$ it suffices to show
		$$\frac{1}{\rho_N}\sum_{n=1}^N b_n Y_n \stackrel{d}{\longrightarrow} \mathcal{CN}(0,1-|\lambda|^2).$$ Furthermore, by circular symmetry, we only need to establish that $${\rm Re}\,\bigg(\frac{\alpha}{\rho_N}\sum_{n=1}^N b_n Y_n\bigg) \stackrel{d}{\longrightarrow} \mathcal N\bigg(0,\frac{1-|\lambda|^2}{2}\bigg),$$ for any fixed $\alpha\in \T$.

		Now we set $$X_{N,n}:= {\rm Re}\, \bigg( \frac{\alpha \,b_{N+1-n} Y_{N+1-n}}{\rho_N}\bigg),\quad 1\leq n\leq N,$$ and $$\mathcal{F}_{N,n} := \mathscr{F}_{N-n+1},\quad 0\leq n \leq N.$$ The proof proceeds by applying the Brown--Eagleson theorem to the triangular array \eqref{eq-martingale-array}. Conditions \eqref{eq-condition-incresing-chain} and \eqref{eq-condition-martingale} are clearly met, so it suffices to check conditions (C1)-(C3) there.

		By Lemma \ref{lem-Yn}-\ref{item:reYn2} and \eqref{eq-bn-Lindberg},
		\begin{align*}
			m_N&=\frac{1}{\rho^2_N} \max_{1\leq n \leq N} \bigg(\frac{1-|\lambda|^2}{2} |b_n|^2+ \frac{{\rm Re}\,(\alpha^2 \mu \,b_n^2f^{\circ (n+1)})}{2}\bigg)\\
			&\leq \frac{\max_{1\leq n\leq N} |b_n|^2}{\rho^2_N}\to 0, \quad \text{as $N\to \infty$}.
		\end{align*}
		This implies (C1). Moreover, we have 
		\begin{align} 
			V_N^2&= \frac{1}{\rho^2_N}\sum_{n=1}^N \bigg(\frac{1-|\lambda|^2}{2} |b_n|^2+ \frac{{\rm Re}\,(\alpha^2 \mu \,b_n^2f^{\circ (n+1)})}{2}\bigg) \nonumber \\
			&=\frac{1-|\lambda|^2}{2}+ \frac{1}{\rho^2_N}\sum_{n=1}^N \bigg(\frac{{\rm Re}\,(\alpha^2 \mu \,b_n^2f^{\circ (n+1)})}{2}\bigg). \label{eq-VN-value}
		\end{align}
		An application of Lemma \ref{lem-weak-law} implies $$\frac{1}{\rho^2_N} \sum_{n=1}^N b_n^2 f^{\circ n} \stackrel{p}{\longrightarrow} 0,$$ and hence the second term of $V_N^2$ converges to zero in probability. This shows (C2) holds true with $\sigma^2 = \frac{1-|\lambda|^2}{2}$. It remains to prove (C3). Note that $$\max_{1\leq n\leq N} \frac{{\rm Re}\, (\alpha b_n Y_n)}{\rho_N} \leq  \big(1+|\lambda|\big)\frac{\max_{1\leq n\leq N}|b_n|}{\rho_N}\to 0, \quad N\to \infty.$$ So for any fixed $\varepsilon>0$, we have
		\begin{align*}
			&\sum_{n=1}^N \E[X_{N,n}^2 I(|X_{N,n}|>\varepsilon)| \cal F_{N,n-1}]\\
			=&\frac{1}{\rho^2_N} \sum_{n=1}^N \E\bigg[\bigg({\rm Re}\, (\alpha b_n Y_n)\bigg)^2 I\bigg(\frac{{\rm Re}\, (\alpha b_n Y_n)}{\rho_N}> \varepsilon \bigg) \bigg| \F_{n+1}\bigg]\\
			\leq& \frac{1}{\rho^2_N} \sum_{n=1}^N \E\bigg[\bigg({\rm Re}\, (\alpha b_n Y_n)\bigg)^2 I\bigg(\max_{1\leq n\leq N}\frac{{\rm Re}\, (\alpha b_n Y_n)}{\rho_N}> \varepsilon\bigg) \bigg| \F_{n+1}\bigg]\\
			=&0
		\end{align*}
		provided that $N$ is sufficiently large. This proves (C3) and completes the proof of Theorem \ref{thm-NS}.
\end{proof}

	\noindent \textbf{Remark.} Parallel to Theorem \ref{thm-NS}, Nicolau and Soler i Gibert also established a CLT for tails: if $a\in \ell^2$ and $$\lim_{N\to \infty} \frac{|a_N|^2}{\sum_{n=N}^\infty |a_n|^2}=0,$$ then with $\sigma(N):= \big\|\sum_{n=N}^\infty a_n f^{\circ n}\big\|_2$, it holds that $$\frac{1}{\sigma(N)}\sum_{n=N}^\infty a_n f^{\circ n}\stackrel{d}{\longrightarrow} \mathcal{CN}(0,1),\quad \text{ as $N\to \infty$}.$$ The arguments used here can also be adapted to prove this result. In this case, the proof is even simpler: it follows directly from the central limit theorem for reverse martingales \cite{Loy69}, without requiring the triangular array version.

	\section{Proof of Theorem \ref{thm-BE-inner}}

We apply the same strategy as in the proof of Theorem \ref{thm-NS}, substituting the Brown--Eagleson Theorem with a deeper result of E. Haeusler \cite[Theorem 1]{Hae88}. The latter asserts that: for any $\delta>0$, there exists a constant $C$ depending only on $\delta$, such that
\begin{align*}
	&\sup_{x\in \R}\bigg| \PP\Big( \sum_{n=1}^N X_n\leq x \Big) - \PP \Big(\cal N(0,1) \leq x \Big)\bigg| \\
	\leq & C\cdot  \bigg( \sum_{n=1}^N \E\big[ |X_n|^{2+2\delta}\big]+ \E\bigg[\, \bigg|\sum_{n=1}^N \E\big[ X_n^2 | \cal F_{n-1}\big] -1\bigg|^{1+\delta}\bigg]\bigg)^{\frac{1}{3+2\delta}}
\end{align*}
holds for any real-valued martingale difference sequence $(X_n)_{n=1}^N$ of length $N$ (with respect to the filtration $(\cal F_n)^N_{n=0}$).

We also need the following lemma. 
\begin{lemma}\label{lem-normal-ball-probability}
	Let $\Phi(x)$ be the distribution function of $\mathcal N(0,1/2)$. Then
	$$
	\sup_{x\in \R}\bigg| \Phi\big(px+q\big) - \Phi\big(x\big)\bigg|
	\leq 1.35 \cdot (|p-1|+|q|),
	$$
	for any $1/2\leq p\leq 3/2$ and $-1\leq q \leq 1$.
\end{lemma}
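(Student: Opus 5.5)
The plan is to split the difference into a translation error and a scaling error, and bound each by a mean value estimate on $\Phi$. We have $\Phi(x) = \Phi_0(\sqrt{2}\,x)$, where $\Phi_0$ is the standard normal distribution function. The density of $\Phi$ is
$$\varphi(x)=\frac{1}{\sqrt{\pi}}e^{-x^2}, \qquad \|\varphi\|_\infty = \frac{1}{\sqrt{\pi}}\approx 0.5642.$$
By the triangle inequality,
$$|\Phi(px+q)-\Phi(x)| \leq |\Phi(px+q)-\Phi(px)| + |\Phi(px)-\Phi(x)|.$$

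\textbf{Translation term.} By the mean value theorem, the first term is at most $\|\varphi\|_\infty |q| = \pi^{-1/2}|q| \le 0.57|q|$.

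\textbf{Scaling term.} For the second term, write it as $\int_x^{px}\varphi(t)\,dt$. For $x\ge 0$ with $p\ge1$, the integration interval lies in $[x,px]$, so the integrand is at most $\varphi(x)$. The term is then bounded by
$$(p-1)\,x\,\varphi(x) \le (p-1)\sup_{x}x\varphi(x).$$
The supremum is attained at $x=1/\sqrt2$, giving
$$\sup_x x\varphi(x) = \frac{1}{\sqrt{2\pi e}}\approx 0.242.$$
For $p<1$, the interval is $[px,x]$ and $\varphi\le\varphi(px)$ on it. The term is therefore at most
$$(1-p)\,x\,\varphi(px) = \frac{1-p}{p}\,(px)\varphi(px) \le \frac{1-p}{p}\cdot 0.242.$$
Since $p\ge 1/2$, this gives at most $2\cdot 0.242\,|p-1|\approx 0.484|p-1|$. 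Negative $x$ follows by the symmetry $\Phi(-y)=1-\Phi(y)$, which turns the difference into the same quantity at $-x$.

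\textbf{Combining.} Adding the two pieces gives
$$\sup_x|\Phi(px+q)-\Phi(x)| \le 0.49|p-1|+0.57|q| \le 1.35(|p-1|+|q|).$$
The constant 1.35 is thus comfortably large. The only mildly delicate point is the scaling term for $p<1$, where the lower bound $p\ge1/2$ is used to control the factor $1/p$. The hypothesis $|q|\le1$ is not even needed in this argument, and is presumably included only for the later application.
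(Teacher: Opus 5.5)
Your proof is correct, and it takes a different route from the paper's.

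The paper applies the mean value theorem once to the whole difference, $\Phi(px+q)-\Phi(x)=\pi^{-1/2}\big((p-1)x+q\big)e^{-\xi^2}$ with $\xi$ between $x$ and $px+q$. It then uses $p\ge 1/2$ and $|q|\le 1$ to localize the intermediate point, $|\xi|\ge \max\{0,|x|/2-1\}$. The resulting Gaussian factor absorbs the factor $|x|$ multiplying $|p-1|$. The constant $1.35$ is just the numerical value of $\pi^{-1/2}\max\{\sup_x |x|e^{-\max\{0,|x|/2-1\}^2},1\}$, with the supremum attained at $|x|=1+\sqrt3$.

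You instead separate translation from dilation. The translation costs at most $\|\varphi\|_\infty|q|$. The dilation is handled by the monotonicity of $\varphi$ on $[0,\infty)$ together with $\sup_{y\ge0}y\varphi(y)=(2\pi e)^{-1/2}$. The rewriting $(1-p)x\varphi(px)=\frac{1-p}{p}(px)\varphi(px)$ is the only place where $p\ge 1/2$ enters.

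The paper's argument is a single computation, but it needs the localization of $\xi$, hence the bound on $|q|$, and it produces a cruder constant. Yours gives the sharper estimate $\max\{1,1/p\}(2\pi e)^{-1/2}|p-1|+\pi^{-1/2}|q|$ for all $p\ge1/2$ and all real $q$. As a result, when the lemma is invoked in the proof of Theorem~\ref{thm-BE-inner} with $p=p_N\ge 1$ and $q=q_N$, no smallness of $q_N$ needs to be checked.
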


\begin{proof} 
	By the mean value theorem, for any $x\in \R$, there exists $\xi$ between $x$ and $px+q$, such that
	$$
	\Big|\Phi\big(px+q\big)- \Phi(x)\Big|
	= \frac{|(p-1)x+q|}{\sqrt{\pi}} \,e^{-\xi^2}.
	$$
	Observe that
	$$
	|\xi|\geq \inf \big\{|px+q|: 1/2 \leq p \leq 3/2,\ -1\leq q \leq 1\big\}
	=\max\big\{0,|x|/2 -1 \big\}=: m(x),
	$$
	and hence
	$$
	C:= \frac{1}{\sqrt{\pi}}\max \bigg\{ \sup_{x\in \R} |x|\cdot e^{-m(x)^2}, \, 1\bigg\}\approx 1.348... <1.35 
	$$
	provides a desired upper bound.
\end{proof}

\begin{proof}[Proof of Theorem \ref{thm-BE-inner}]
We use the same notation as in the proof of Theorem \ref{thm-NS}. For any fixed $\delta>0$ and $\alpha\in \T$, set 
$$X_n:= \bigg(\frac{1-|\lambda|^2}{2}\bigg)^{-1/2} \cdot {\rm Re}\, \bigg( \frac{\alpha b_{N+1-n} Y_{N+1-n}}{\rho_N}\bigg), \quad 1\leq n \leq N.$$ Then we have
\begin{align}\label{eq-2+2delta-norm}
	\sum_{n=1}^N \E\big[ |X_n|^{2+2\delta}\big]\lesssim_{\delta,\lambda} \frac{\sum_{n=1}^N |b_n|^{2+2\delta}}{\rho_N^{2+2\delta}}
	\lesssim_{\delta,\lambda} \bigg( \sum_{n=1}^N |a_n|^{2+2\delta}\bigg) \bigg( \sum_{n=1}^N |a_n|^2\bigg)^{-1-\delta}.
\end{align}
Moreover, dividing both sides of \eqref{eq-VN-value} by $(1-|\lambda|^2)/2$, we obtain
\begin{align*} 
	\sum_{n=1}^N \E\big[ X_n^2 | \cal F_{n-1}\big]= 1+ \bigg(\frac{1-|\lambda|^2}{2}\bigg)^{-1} \frac{1}{\rho^2_N} \sum_{n=1}^N \bigg(\frac{{\rm Re}\,(\alpha^2 \mu \,b_n^2f^{\circ (n+1)})}{2}\bigg).
\end{align*}
This implies 
\begin{align*}
	&\bigg|\sum_{n=1}^N \E\big[ X_n^2 | \cal F_{n-1}\big] -1\bigg|^{1+\delta}\lesssim_{\delta,\lambda} \bigg| \frac{1}{\rho_N^2}\sum_{n=1}^N b_n^2 f^{\circ (n+1)}\bigg|^{1+\delta}.
\end{align*}
Now taking the expectation and using a Khintchine type inequality \cite[Corollary 1.4]{Nic22}, we get
\begin{align*} 
	\E\bigg[\, \bigg|\sum_{n=1}^N \E\big[ X_n^2 | \cal F_{n-1}\big] -1\bigg|^{1+\delta}\bigg] \lesssim_{\delta,f}& \bigg(\sum_{n=1}^N |b_n|^4 \bigg)^{\frac{1+\delta}{2}} \rho_N^{-2-2\delta} \\ 
	\lesssim_{\delta,f} &\bigg( \sum_{n=1}^N |a_n|^4\bigg)^{\frac{1+\delta}{2}}\bigg( \sum_{n=1}^N |a_n|^2\bigg)^{-1-\delta}
\end{align*}

Combining this with \eqref{eq-2+2delta-norm}, we deduce from Haeusler's theorem that
\begin{align}\label{eq-using-Haeusler}
\sup_{x\in \R}\bigg| \PP\Big( \sum_{n=1}^N X_n\leq x \Big) - \PP \Big(\cal N(0,1) \leq x \Big)\bigg| 
\lesssim_{\delta,f} \bigg(\sum_{n=1}^N |a_n|^{4} \bigg)^{\frac{1+\delta}{6+4\delta}}  \bigg(\sum_{n=1}^N |a_n|^{2} \bigg)^{-\frac{1+\delta}{3+2\delta}}
\end{align} whenever $\delta\geq 1$.

Moreover, by the identity \eqref{eq-sum-to-martingale}, standard calculations show that
\begin{align*} 
	\sum_{n=1}^N X_n = \sqrt{2} \cdot \bigg( p_N {\rm Re}\,\big( \alpha Z_N \big)- R_N\bigg),
\end{align*}
where $$p_N: =\frac{\sigma_N}{{\sqrt{1-|\lambda|^2}\rho_N}}$$ and the remainder $R_N$ satisfies $$|R_N| \leq \frac{|\lambda|}{\sqrt{1-|\lambda|^2}} \frac{|b_N|}{\rho_N} =: q_N.$$ Consequently, the left-hand side of \eqref{eq-using-Haeusler} can be rewritten as
$$\sup_{x\in \R} \bigg| \PP\bigg( {\rm Re}\, \big( \alpha Z_N\big)\leq \frac{x-R_N}{p_N} \bigg) - \PP\bigg( \cal N(0,1/2) \leq x \bigg)\bigg|.$$

Since $\lim_{N\to \infty}p_N=1$, we see for sufficiently large $N$, 
$$\PP\bigg( {\rm Re}\, \big( \alpha Z_N\big)\leq \frac{x-q_N}{p_N} \bigg) \leq \PP\bigg( {\rm Re}\, \big( \alpha Z_N\big)\leq \frac{x-R_N}{p_N} \bigg) \leq \PP\bigg( {\rm Re}\, \big( \alpha Z_N\big)\leq \frac{x+q_N}{p_N} \bigg).$$ 

Now applying Lemma \ref{lem-normal-ball-probability}, we conclude that 
\begin{align*} 
	& \sup_{x\in \R} \bigg[ \PP\bigg( {\rm Re}\, \big( \alpha Z_N\big)\leq x \bigg)- \PP\bigg( \cal N(0,1/2) \leq x \bigg) \bigg] \\
	= & \sup_{x\in \R} \bigg[ \PP\bigg( {\rm Re}\, \big( \alpha Z_N\big)\leq \frac{x-q_N}{p_N} \bigg)- \PP\bigg( \cal N(0,1/2) \leq \frac{x-q_N}{p_N} \bigg) \bigg] \\
	\leq &  \sup_{x\in \R} \bigg[ \PP\bigg( {\rm Re}\, \big( \alpha Z_N\big)\leq \frac{x-R_N}{p_N} \bigg)- \PP\bigg( \cal N(0,1/2) \leq \frac{x-q_N}{p_N} \bigg) \bigg]\\
	\leq  & \sup_{x\in \R} \bigg[ \PP\bigg( {\rm Re}\, \big( \alpha Z_N\big)\leq \frac{x-R_N}{p_N} \bigg)- \PP\bigg( \cal N(0,1/2) \leq x \bigg) \bigg] \\
	& + \sup_{x\in \R} \bigg[ \PP\bigg( \cal N(0,1/2) \leq x \bigg)- \PP\bigg( \cal N(0,1/2) \leq \frac{x-q_N}{p_N} \bigg) \bigg].
\end{align*}
The first supremum can be bounded using \eqref{eq-using-Haeusler}. Next we consider the second one. By \eqref{eq-rhoN-sigmaN} we have $\sigma_N^2 = (1-|\lambda|^2) \rho^2_N +|\lambda|^2 |b_N|^2,$ and hence $$|p_N-1|\leq \frac{|\lambda|}{\sqrt{1-|\lambda|^2}} \frac{|b_N|}{\rho_N}.$$ Now an application of Lemma \ref{lem-normal-ball-probability} gives 
\begin{align*} 
	& \sup_{x\in \R} \bigg[ \PP\bigg( \cal N(0,1/2) \leq x \bigg)- \PP\bigg( \cal N(0,1/2) \leq \frac{x-q_N}{p_N} \bigg) \bigg] \\
	 =& \sup_{x\in \R} \bigg[ \PP\bigg( \cal N(0,1/2) \leq p_Nx+q_N \bigg)- \PP\bigg( \cal N(0,1/2) \leq x \bigg) \bigg]\\
	 \leq & 1.35\cdot (|p_N-1|+|q_N|) \\
	 \leq & 2.7\cdot \frac{|\lambda|}{\sqrt{1-|\lambda|^2}} \frac{|b_N|}{\rho_N}.
\end{align*}
Then we can apply \eqref{eq-2-norm-eqvalence} and \eqref{eq-rhoN-sigmaN} to obtain
\[
 \sup_{x\in \R} \bigg[ \PP\bigg( \cal N(0,1/2) \leq x \bigg)- \PP\bigg( \cal N(0,1/2) \leq \frac{x-q_N}{p_N} \bigg) \bigg]\leq 2.7\cdot \frac{|\lambda|}{1-|\lambda|} \frac{|b_N|}{\sqrt{\sum_{n=1}^N |a_n|^2}}.
\]
Combining these estimates yields
\begin{align*} 
	&\sup_{x\in \R} \bigg[ \PP\Big( {\rm Re}\, \big( \alpha Z_N\big) \leq x \Big) - \PP\Big( \cal{N}(0,1/2) \leq x \Big)\bigg] \\
	\leq & \,\,C(\delta,f) \cdot \bigg(\sum_{n=1}^N |a_n|^{4} \bigg)^{\frac{1+\delta}{6+4\delta}}  \bigg(\sum_{n=1}^N |a_n|^{2} \bigg)^{-\frac{1+\delta}{3+2\delta}} + 2.7 \cdot \frac{|\lambda|}{1-|\lambda|} \frac{\big|\sum_{n=1}^N \lambda^{N-n} a_n\big|}{\sqrt{\sum_{n=1}^N |a_n|^2}}.
\end{align*}
The same upper bound for $$\sup_{x\in \R} \bigg[ \PP\Big( \cal{N}(0,1/2) \leq x \Big)-\PP\Big( {\rm Re}\, \big( \alpha Z_N\big) \leq x \Big) \bigg]$$ can be obtained by a similar argument, and the proof of Theorem \ref{thm-BE-inner} is completed.
\end{proof}

\section{Proofs of Lemma \ref{lem-Yn} and Lemma \ref{lem-transfer}}

	\begin{proof}[Proof of Lemma \ref{lem-Yn}] Clearly, $Y_n=f^{\circ n} - \lambda f^{\circ (n+1)}$ is $\F_{n}$-measurable. Furthermore, since $f^{\circ n}$ preserves the Lebesgue measure, one has
			\begin{align}
				\big\la f^{\circ n}, \big(f^{\circ (n+1)}\big)^k \big\ra = \big\la z \circ f^{\circ n}, f^k \circ f^{\circ n}\big\ra= \big\la z, f^k\big\ra =
				\begin{cases}
					\lambda\,, &  \text{if }k=1; \\ 
					 0\,, &  \text{otherwise}.
				\end{cases}
		\end{align}
		Thus $$\E\big[ f^{\circ n} | \F_{n+1}\big] = \sum_{k\in \mathbb Z} \big\la f^{\circ n}, \big(f^{\circ (n+1)}\big)^k \big\ra \cdot \big(f^{\circ (n+1)}\big)^k = \lambda f^{\circ (n+1)}.$$ This shows $(Y_n)_{n=1}^\infty$ is indeed a reverse martingale difference sequence. 

		Since $\overline{f^{\circ (n+1)}}$ is $\mathscr{F}_{n+1}$-measurable, we have $$\E\big[ f^{\circ n} \ol{f^{\circ (n+1)}} | \F_{n+1}\big] = \ol{f^{\circ (n+1)}} \cdot \E\big[ f^{\circ n} | \F_{n+1}\big] = \ol{f^{\circ (n+1)}}\cdot \lambda f^{\circ (n+1)} = \lambda.$$ This implies that 
		\begin{align*} 
			\E\big[|Y_n|^2 | \F_{n+1}\big] =& \E\big[ 1+ |\lambda|^2 - \lambda \ol{f^{\circ n}} f^{\circ (n+1)} - \ol{\lambda} f^{\circ n} \ol{f^{\circ (n+1)}} | \F_{n+1}\big]\\
			=&1+ |\lambda|^2  - \lambda \E\big[ \,\ol{f^{\circ n}} f^{\circ (n+1)}|\F_{n+1}\big] - \ol{\lambda} \E\big[f^{\circ n} \ol{f^{\circ (n+1)}} | \F_{n+1}\big]\\
			=& 1-|\lambda|^2.
		\end{align*}
		The identity in \ref{item:|Yn|2} follows.

		By expanding the square, we obtain $$Y_n^2 = \big(f^{\circ n}\big)^2- 2 \lambda f^{\circ n} f^{\circ (n+1)} + \lambda^2 \big(f^{\circ (n+1)}\big)^2.$$ 
		Note that the conditional expectation of the cross term is $$\E\big[2 \lambda f^{\circ n} f^{\circ (n+1)} | \F_{n+1}\big] =  2 \lambda^2  \big(f^{\circ (n+1)}\big)^2 .$$
		Moreover, it holds that 
		\begin{align*} 
			\big\la \big(f^{\circ n}\big)^2 , \big(f^{\circ (n+1)}\big)^k \big\ra = \big\la z^2 \circ f^{\circ n}, f^k \circ f^{\circ n}\big\ra= \big\la z^2 , f^k\big\ra =
				\begin{cases}
					\mu\, , &   \text{if }k=1; \\ 
					 \lambda^2 \, , &  \text{if }k=2;\\
					 0\, , & \text{otherwise}.
				\end{cases}
		\end{align*}
		This implies 
		\begin{align*} 
			\E\big[ \big(f^{\circ n}\big)^2  | \F_{n+1}\big] =& \sum_{k\in \mathbb Z} \big\la \big( f^{\circ n}\big)^2 , \big(f^{\circ (n+1)}\big)^k \big\ra \cdot \big(f^{\circ (n+1)}\big)^k\\ 
			=&\mu f^{\circ (n+1)} + \lambda^2 \big(f^{\circ (n+1)}\big)^2.
	\end{align*}
	Now \ref{item:Yn2} follows by combining these identities.
	
	The identity in \ref{item:reYn2} is a direct consequence of \ref{item:|Yn|2} and \ref{item:Yn2}. Indeed, observe that $$|\alpha Y_n|^2 = \big( {\rm Re}\, \alpha Y_n \big)^2 + \big({\rm Im}\, \alpha Y_n \big)^2, \quad \text{and}\quad {\rm Re}\, (\alpha Y_n)^2 = \big( {\rm Re}\, \alpha Y_n \big)^2 - \big({\rm Im}\, \alpha Y_n \big)^2.$$
	By taking the conditional expectation, we get 
	\begin{align*} 
		2 \E\big[({\rm Re}\, \alpha Y_n)^2 |\mathscr F_{n+1}\big]=& \E\big[ |\alpha Y_n|^2 | \F_{n+1}\big]+ \E\big[  {\rm Re}\, \big(\alpha Y_n \big)^2 | \F_{n+1}\big]\\
		=& |\alpha |^2 \,\E\big[ |Y_n|^2 | \F_{n+1}\big]+ {\rm Re}\, \big( \alpha^2 \, \E\big[ Y_n^2 | \F_{n+1}\big]\big)\\
		=& |\alpha|^2\,(1-|\lambda|^2)+ {\rm Re}\,(\alpha^2 \mu f^{\circ (n+1)}).
	\end{align*}
	The proof of Lemma \ref{lem-Yn} is completed.
	\end{proof}

		\begin{proof}[Proof of Lemma \ref{lem-transfer}]

		The claim \ref{item:keep-no-ell2} follows trivially from
		\begin{align*}
			a_1=b_1,\,\, \text{and} \,\, a_n=b_n-\lambda b_{n-1}, \text{ for }n\geq 2.
		\end{align*}
		Moreover, the claim \ref{item:keep-ell2} can be deduced by taking the $L^2(\T)$ norm on both sides of the following identity:
		$$\sum_{n=1}^\infty a_n z^n = (1-\lambda z) \cdot \bigg( \sum_{n=1}^\infty b_nz^n \bigg).$$
		Next we prove the claim \ref{item:keep-Lindberg}. Consider the generating functions $$A_N(z):=\sum_{n=1}^N a_n z^n,\quad \text{and} \quad B_N(z):=\sum_{n=1}^N b_n z^n.$$
		Then for $N\geq 2$ we have 
		\begin{align*}
			A_N(z)=(1-\lambda z) B_N(z) + \lambda b_Nz^{N+1}.
		\end{align*}
		This implies 
		\begin{align*}
		\big\|A_N \big\|_{L^2(\T)} \leq  \big(1+|\lambda|\big)\big\|B_N \big\|_{L^2(\T)}+|\lambda| |b_N|.
	\end{align*}
	
   Substituting $$|b_N| \leq \sum_{n=1}^N |\lambda|^{N-n} |a_n| \leq  \frac{\max_{1\leq n \leq N} |a_n|}{1-|\lambda|},$$ and using Parseval's identity, one has
	\begin{align}\label{eq1.2}
		\sqrt{\sum_{n=1}^N |a_n|^2} \leq (1+|\lambda|) \sqrt{\sum_{n=1}^N |b_n|^2} + \frac{ \lambda }{1-|\lambda|}\max_{1\leq n \leq N} |a_n|.
	\end{align}
	
	Now if $$\lim_{N\to \infty} \frac{\max_{1\leq n \leq N} |a_n|}{\sqrt{\sum_{n=1}^N |a_n|^2}} = 0,$$ then from \eqref{eq1.2} we deduce that 
	\begin{align}\label{eq-ratio-lower-bound}
		\liminf_{N\to \infty} \frac{\sqrt{\sum_{n=1}^N |b_n|^2}}{\sqrt{\sum_{n=1}^N |a_n|^2}}\geq \frac{1}{1+|\lambda|}.
	\end{align}
	This implies $$\limsup_{N\to \infty} \frac{|b_N|}{\sqrt{\sum_{n=1}^N |b_n|^2}}\leq \limsup_{N\to \infty}\frac{1+|\lambda|}{1-|\lambda|} \cdot \frac{\max_{1\leq n \leq N} |a_n|}{\sqrt{\sum_{n=1}^N |a_n|^2}} =0.$$ The claim \ref{item:keep-Lindberg} follows.
	\end{proof}



\end{document}